\newtheorem{thm}{Theorem}[section]
 \numberwithin{equation}{section} 
 \numberwithin{figure}{section} 
 \theoremstyle{plain}
 \theoremstyle{plain}    
 \newtheorem{cor}[thm]{Corollary} 
 \theoremstyle{plain}    
 \newtheorem{defi}[thm]{Definition}
 \theoremstyle{plain}    
 \theoremstyle{remark}
 \newtheorem{rem}[thm]{Remark}
 \theoremstyle{definition}
\newcommand{\R}{\mathbb R}
\newcommand{\C}{\mathbb C}
\renewcommand{\a}{\alpha}
\newcommand{\e}{\varepsilon}
\newcommand{\f}{\varphi}
\newcommand{\p}{\psi}
\begin{document}

\title[Continuous approximation of qpsh functions]{Continuous approximation of quasiplurisubharmonic  functions}

\author{Philippe Eyssidieux, Vincent Guedj, Ahmed Zeriahi}

\address{Universit\'e Joseph Fourier et Institut Universitaire de France, France}

\email{Philippe.Eyssidieux@ujf-grenoble.fr }

\address{Institut de Math\'ematiques de Toulouse et Institut Universitaire de France,   \\ Universit\'e Paul Sabatier \\
118 route de Narbonne \\
F-31062 Toulouse cedex 09\\}

\email{vincent.guedj@math.univ-toulouse.fr}

\address{Institut de Math\'ematiques de Toulouse,   \\ Universit\'e Paul Sabatier \\
118 route de Narbonne \\
F-31062 Toulouse cedex 09\\}

\email{zeriahi@math.univ-toulouse.fr}

\begin{abstract} 
Let $X$ be a compact K\"ahler manifold and $\theta$ a smooth closed $(1,1)$-real form representing a
big cohomology class $\a \in H^{1,1}(X,\R)$. 
The purpose of this note is to show, using pluripotential and viscosity techniques, that any $\theta$-plurisubharmonic function
$\f$ can be approximated from above by a decreasing sequence of {\it continuous}
$\theta$-plurisubharmonic functions with minimal singularities, assuming that there exists a single such function.
 \end{abstract}

\date{\today \\
 The authors are partially supported by the ANR project MACK}

\maketitle

\section*{\footnotesize{\it{Dedicated to D.H.Phong on the occasion of his 60th birthday}}}

\text{ }

\section{Introduction}

Let $X$ be a compact K\"ahler manifold and $\a \in H^{1,1}(X,\R)$ be a big cohomology class. Recall that a cohomology class $\a \in H^{1,1}(X,\R)$ is {\it big} if it contains a {\it K\"ahler current},
i.e. a positive closed current  which dominates a K\"ahler form. 

Fix $\theta$ a smooth closed real $(1,1)$-form representing $\a$. We denote by  $PSH(X,\theta)$ 
 the set of all $\theta$-plurisubharmonic functions, 
i.e. those functions $\f:X \rightarrow \R \cup \{-\infty\}$ which can be written 
locally as the sum of a smooth and a plurisubharmonic function and such that the
current $\theta+dd^c \f$ is a closed positive current, i.e.: $\theta+dd^c \f\ge 0$ in the sense of currents. 
It follows from the $\partial\overline{\partial}$-lemma that any closed  positive
current $T$ in $\a$ can be written as $T=\theta+dd^c \f$ for some $\f \in PSH(X,\theta)$. 
We use the standard normalization
$$
d=\partial+\overline{\partial},
\hskip.5cm
d^c:=\frac{1}{2i \pi} (\partial-\overline{\partial}) 
\text{ so that } dd^c =\frac{i}{\pi} \partial \overline{\partial}.
$$

In general K\"ahler currents are too singular, so one usually
prefers to work with
 positive currents in $\a$ having {\it minimal singularities}. 
A positive current  $T=\theta+dd^c \f \in \a$ (resp. a $\theta$-plurisubharmonic function $\f$)  has minimal singularities 
if for every other positive current $S=\theta+dd^c \p \in \a$,
there exists $C \in \R$ such that $\p \leq \f+C$ on $X$. The function
$$
V_{\theta}:=\sup \{  v \, | \, v \in PSH(X,\theta) \text{ and } \sup_X v\le 0 \}
$$
is an example of $\theta$-psh function with minimal singularities. It satisfies $\sup_X V_{\theta}=0$. We let
$$
P(\a):=\{ x \in X \, | \, V_{\theta}(x) =-\infty \}
\text{ and }
NB(\a):=\{ x \in X\, | \, V_{\theta} \notin L_{loc}^{\infty}(\{x\}) \}
$$
denote respectively the {\it polar locus} and the {\it non bounded locus} of $\a$. 
The definitions clearly do not depend on the
choice of $\theta$ and coincide with the polar (resp. non bounded locus) of any $\theta$-psh function with minimal singularities.

\medskip

The purpose of this note is to show that if a big cohomology class contains {\it one} current having minimal singularities and exponentially continuous potentials, then there is actually plenty of  such currents:

\medskip
\noindent {\bf THEOREM.}
{\it 
Let $X$ be a compact K\"ahler manifold and let $\a \in H^{1,1}(X,\R)$ be a big cohomology class
such  that the polar locus $P(\a)$ coincides with  the non-bounded locus $NB(\a)$.

Fix $\theta \in \a$ a smooth representative and $T=\theta+dd^c \f$ a positive current in $\a$, where
$\f \in PSH(X,\theta)$.
Then there exists $\f_j \in PSH(X,\theta)$ a sequence of exponentially continuous 
$\theta$-plurisubharmonic functions
which have minimal singularities and decrease towards $\f$. 
}
\medskip

We say here that a $\theta$-psh function is {\it exponentially continuous} iff $e^{\f}:X \rightarrow \R$ is continuous. 
Observe that if there exists one exponentially continuous $\theta$-psh function with minimal singularities, then $P(\a)=NB(\a)$.

The technical condition $P(\a)=NB(\a)$ is thus necessary.
It is obviously satisfied when $\a$ is semi-positive, or even {\it bounded} (i.e. there exists
a positive closed current in $\a$ with bounded potentials, a condition that has become important in complex dynamics
recently, see \cite{DG}), since $P(\a)=NB(\a)=\emptyset$ in this case.
A more subtle example of a big and nef class $\a$ with $P(\a)=NB(\a) \neq \emptyset$ has been given in \cite[Example 5.4]{BEGZ}.

It is easy to construct $\theta$-plurisubharmonic functions $\p$  with $P(\p)  \subsetneq NB(\p)$, 
however we do not know of
a single example of a big class $\a$ for which $P(\a)$ is strictly smaller than $NB(\a)$.

\smallskip

Despite the relative modesty of its conclusion, this result relies on three important tools:

-the regularization techniques of Demailly as used in  \cite{BD},

-the resolution of degenerate complex Monge-Amp\`ere equations in big cohomology classes, as developed in \cite{BEGZ},

-and the viscosity approach to complex Monge-Amp\`ere equations \cite{EGZ2}.
\smallskip

The latter was developed in the case where $\a$ is both big and semi-positive, hence we need here to extend this technique
so as to cover the general setting of big classes. We stress that the global viscosity comparison principle
holds for a big cohomology class $\a$ if and only if $P(\a)=NB(\a)$ (Theorem \ref{thm:comparison}).

\smallskip
 
Our approximation result is new even when $\a$ is both big and semi-positive. Let us stress that
continuous $\theta$-plurisubharmonic functions are easy to regularize by using Richberg's technique \cite{R}.
As a consequence we obtain the following:

\medskip
\noindent {\bf COROLLARY.}
{\it 
 Let $(V,\omega_V)$ be a compact normal K\"ahler space and let $\f$ be a $\omega_V$-plurisubharmonic function on $V$.
Then there exists a  sequence $(\f_j)$ of smooth $\omega_V$-plurisubharmonic functions 
which decrease towards $\f$.
}
\medskip

When $\omega_V$ is a Hodge form, this regularization can be seen as a consequence of the extension result of \cite{CGZ}.

Approximation from above by regular objects is of central use in the theory of complex Monge-Amp\`ere operators,
as the latter are continuous along (and even defined through) such monotone sequences \cite{BT82}, while they are not
continuous with respect to the weaker $L^1$-topology \cite{Ceg}.

\medskip

\noindent {\it Plan of the note.} We first establish our main result when the underlying cohomology class is also
semi-positive (section \ref{sec:semipositive}), as the viscosity technology is already available \cite{EGZ2}; the corollary
follows then easily by using Richberg's regularization result. 
 We then (section \ref{sec:bigviscosity}) adapt the techniques of \cite{EGZ2} to the general context of big cohomology classes.
 The technical condition $P(\a)=NB(\a)$ naturally shows up as it is necessary for the global viscosity comparison principle to hold.
 We finally (section \ref{sec:bigpluripotential}) use recent stability estimates for big cohomology classes \cite{GZ3} to
 obtain continuous solutions of slightly more general Monge-Amp\`ere equations, which allow us to prove our main result.

\medskip

\noindent {\bf D\'edicace.} {\it  C'est un plaisir  de contribuer \`a  ce volume en l'honneur de Duong Hong Phong, dont nous appr\'ecions la g\'en\'erosit\'e, la vision et le bon go\^ut, tant math\'ematique que gastronomique !}

\section{The case of semi-positive classes} \label{sec:semipositive}

We fix once and for all $(X,\omega_X)$ a compact K\"ahler manifold of complex dimension $n$, 
$\a \in H^{1,1}(X,\R)$ a big cohomology class and $\theta$ a smooth closed $(1,1)$ form representing $\a$.

\subsection{Minimal vs analytic singularities}

Recall that  $\a$ is {\it semi-positive} if $\theta$ can be chosen to be a semi-positive form. 
In this case a $\theta$-psh function
has minimal singularities if and only if its is {\it bounded}. 
The easiest example of $\theta$-psh functions with minimal singularities are 
constant functions which are indeed $\theta$-psh iff $\theta$ is semipositive.

For more general $\a$,  $\theta$-psh function with minimal singularities
can be constructed as enveloppes, e.g.:
$$V_{\theta}:=\sup \{ v \, | \, v \in PSH(X,\theta) \text{ and } \sup_X v\le 0 \}.$$

Note that if $V \in PSH(X,\theta)$ is another function with minimal singularities, then
$V-V_{\theta}$ is globally bounded on $X$. Also if $\theta'=\theta+dd^c \rho$ is another smooth
form representing $\a$, then $PSH(X,\theta')=PSH(X,\theta)-\rho$ where
$\rho \in {\mathcal C}^{\infty}(X,\R)$ hence $V_{\theta}-V_{\theta'}$ is also globally bounded on $X$.

\begin{defi} 
The polar locus of $\a$ is
$$
P(\a):=\{ x \in X \, | \, V_{\theta}(x) =-\infty \}.
$$

The non-bounded locus of $\a$ is
$$
NB(\a):=\{ x \, | \, V_{\theta} \notin L_{loc}^{\infty}(\{x\}) \}
$$
\end{defi}

The observations above show that these definitions only depend on $\a$.
Clearly $P(\a) \subset NB(\a)$ and $NB(\a)$ is closed. We shall assume in the sequel 
that $P(\a)=NB(\a)$ which, as will turn out, is equivalent to saying that there
exists one exponentially continuous $\theta$-psh function with minimal singularities.

\smallskip

By definition $\a$ is big if it contains a {\it K\"ahler current}, i.e.
there is a (singular) positive current $T \in \a$ and $\e>0$ such that $T \geq \e \omega_X$.
It follows from the regularization techniques of Demailly (see \cite{Dem})
that one can further assume that $T$ has {\it analytic singularities}:

\begin{defi}
A positive closed current $T$ has analytic singularities if it can be locally written $T=dd^c u$, with
$$
u=\frac{c}{2}\log \left[ \sum_{j=1}^s |f_j|^2 \right]+v,
$$
where $c>0$, $v$ is smooth and the $f_j$'s are holomorphic functions.
\end{defi}

We let $\rm{Amp}(\a)$ denote the {\it ample locus} of $\a$, i.e. the Zariski open subset of all points
$x \in X$ for which there exists a K\"ahler current in $\a$ with analytic singularities which is smooth in
a neighborhood of $x$.

It follows from the work of Boucksom \cite{Bou} that one can find a single K\"ahler current $T_0=\theta+dd^c \p_0$
with analytic singularities in $\a$ such that 
$$
\rm{Amp}(\a)=X \setminus \rm{Sing} \, T_0.
$$
In the sequel we fix such a K\"ahler current $T_0$ and assume for simplicity that
$$ 
T_0 \geq \omega_X.
$$ 

Observe that  $\p_0$ is  exponentially continuous, however $\p_0$ does not have minimal singularities unless
$\a$ is K\"ahler (see \cite{Bou}).

\medskip

\noindent{\it Bounded vs continuous approximations.}
Fix $\f \in PSH(X,\theta)$ a $\theta$-psh function. 
It is easy to approximate $\f$ from above by a decreasing sequence of $\theta$-psh functions with {\it minimal singularities}.
Indeed we can  set
$$
\f_j:=\max(\f, V_{\theta}-j ) \in PSH(X,\theta).
$$
The latter have minimal singularities and decrease to $\f$ as $j \nearrow +\infty$.
This construction needs however to be refined to get exponentially continuous 
$\theta$-psh approximations with minimal singularities.  The mere existence of exponentially 
continuous $\theta$-psh functions $\p$ with minimal singularities is actually not obvious.

\subsection{Continuous approximations in the semi-positive case}

We show here our main result in the simpler case when $\a$ is both big and semi-positive.

\begin{thm} \label{thm:semipos}
Assume $\a \in H^{1,1}(X,\R)$ is big and semi-positive and let $\f \in PSH(X,\theta)$ be 
a $\theta$-plurisubharmonic function. 

Then there exists a sequence
of continuous $\theta$-plurisubharmonic functions which decrease towards $\f$.
\end{thm}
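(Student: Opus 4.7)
My plan is to reduce to bounded $\theta$-psh potentials and then construct continuous approximants as upper envelopes of continuous data, relying on the viscosity machinery of \cite{EGZ2} (available throughout since $\a$ is big and semi-positive) to secure continuity of those envelopes.

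Because $\theta\ge 0$, real constants lie in $PSH(X,\theta)$, so the truncations $\p_k := \max(\f,-k)$ are bounded $\theta$-psh and decrease pointwise to $\f$. This reduces the theorem to showing that an arbitrary bounded $\p\in PSH(X,\theta)$ is the decreasing limit of continuous $\theta$-psh functions. For such a $\p$, I would first regularize the datum purely at the real-variable level: as $\p$ is USC and bounded on the compact metric space $X$, the Pasch--Hausdorff infimal convolutions $h_j(x) := \sup_{y\in X}\bigl(\p(y) - j\,d(x,y)\bigr)$ yield $j$-Lipschitz continuous functions with $h_j \searrow \p$ pointwise. I then set
$$
u_j := \Bigl(\sup\{v \in PSH(X,\theta) : v \le h_j\}\Bigr)^{*}.
$$
Since $\p$ itself is a competitor, $\p \le u_j \le h_j$, so the squeeze forces $u_j \searrow \p$; the monotonicity $u_{j+1}\le u_j$ is immediate from the nesting of competing families.

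The crux --- and what I expect to be the only real obstacle --- is to prove that each $u_j$ is continuous on $X$. This is exactly a degenerate complex Monge--Amp\`ere obstacle problem: $u_j$ is the maximal $\theta$-psh minorant of the continuous datum $h_j$ and, as a standard balayage argument shows, $(\theta+dd^c u_j)^n$ is supported on the contact set $\{u_j = h_j\}$. In the big semi-positive setting the viscosity framework of \cite{EGZ2} applies globally: the global comparison principle is at our disposal, and continuous viscosity barriers are easily produced ($h_j$ from above, a sufficiently negative constant from below, the latter being genuinely $\theta$-psh thanks to semi-positivity). Perron's method together with the standard sup-/inf-convolution regularization of viscosity solutions then delivers the continuity of $u_j$ on $X$.

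A final diagonal extraction merges everything into one decreasing sequence for $\f$. Writing $v_{k,j}\searrow \p_k$ as $j\to\infty$ for the continuous approximants furnished above, I set $w_1 := v_{1,1}$ and recursively
$$
w_{m+1} := \Bigl(\sup\bigl\{v\in PSH(X,\theta):v\le \min(v_{m+1,j_{m+1}},\,w_m)\bigr\}\Bigr)^{*},
$$
choosing $j_{m+1}$ large enough that $v_{m+1,j_{m+1}}$ approximates $\p_{m+1}$ closely in, say, $L^1(X)$. Since the minimum of two continuous functions is continuous, the continuity statement of the previous paragraph applies again and yields $w_{m+1}\in PSH(X,\theta)\cap C^0(X)$; the ordering $w_{m+1}\le w_m$ is built into the definition, and the sandwich $\p_m \le w_m \le v_{m,j_m}$ combined with $\p_m \searrow \f$ forces $w_m \searrow \f$, completing the argument.
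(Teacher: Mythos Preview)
Your overall strategy---approximate $\f$ from above by continuous data $h_j$ and take the $\theta$-psh envelopes $P(h_j)$---is exactly the paper's. The difference, and the place where your argument has a genuine gap, is the proof that each $u_j=P(h_j)$ is continuous.

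You take $h_j$ merely Lipschitz (via Pasch--Hausdorff) and then assert continuity of $u_j$ through a loose appeal to ``Perron's method together with sup-/inf-convolution regularization of viscosity solutions''. But the comparison principle of \cite{EGZ2} is proved for equations $(\theta+dd^c\f)^n=e^{\e\f}v$ with a \emph{positive continuous} density $v$, not for obstacle problems. Your balayage observation that $(\theta+dd^c u_j)^n$ lives on the contact set says nothing about its density there; $h_j$ is not a supersolution of any Monge--Amp\`ere equation in the required sense, so it does not serve as a viscosity barrier, and neither the comparison principle nor the bump construction applies as stated. The same difficulty recurs in your diagonal step, where the obstacle $\min(v_{m+1,j_{m+1}},w_m)$ is again only continuous.

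The paper closes this gap by choosing the $h_j$ \emph{smooth} from the outset (any usc function on a compact manifold is a decreasing limit of smooth functions). Then Berman--Demailly \cite{BD} yields
\[
(\theta+dd^c P(h_j))^n=\mathbf{1}_{\{P(h_j)=h_j\}}(\theta+dd^c h_j)^n \quad\text{in }\mathrm{Amp}(\a),
\]
so the Monge--Amp\`ere measure of $P(h_j)$ has \emph{bounded} density, and \cite[Theorem~C]{EGZ2} then gives continuity of $P(h_j)$ on $X$. With smooth $h_j$ there is no need for the preliminary reduction to bounded $\f$, nor for the final diagonal extraction: the single sequence $P(h_j)$ already decreases to $\f$.
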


\begin{proof}
Fix $h_j$ a sequence of smooth functions decreasing to $\f$ (recall that $\f$ is upper semi-continuous)
and set
$$
\f_j=P(h_j):=\sup\{ u \, | \, u \in PSH(X,\theta) \text{ and } u \leq h_j \}.
$$
Observe that $\f_j \in PSH(X,\theta)$ and $\f_j \leq h_j$ hence
$$
\f \leq \f_{j+1} \leq \f_j.
$$
We claim that $\f=\lim \searrow \f_j$. Indeed set $\p:=\lim \searrow \f_j \geq \f$. Then
$\p \leq h_j$ for all $j$ and  $\p \in PSH(X,\theta)$,
hence $\p \leq \f$, so that $\p=\f$ as claimed.

It thus suffices to check that $\f_j$ is continuous. It follows from the work of Berman and Demailly \cite{BD}
that $\f_j$ has locally bounded Laplacian on the ample locus ${\rm Amp}({\alpha})$ of $\a$, with
$$
(\theta+dd^c \f_j)^n={\bf 1}_{\{P(h_j)=h_j\}} (\theta+dd^c h_j)^n
\text{ in } {\rm Amp}({\a}).
$$
The measure on the right hand side is absolutely continuous with respect to Lebesgue measure, with 
bounded density. It therefore follows from \cite[Theorem C]{EGZ2} that
$\f_j=P(h_j)$ is continuous.
\end{proof}

\begin{rem}
Observe that a key point in the proof above is that if $h$ is a smooth function on $X$, then
its $\theta$-plurisubharmonic projection $P(h)$ is a {\it continuous} $\theta$-plurisubharmonic function
with minimal singularities.

Although the proof is quite short in appearance, it uses several important tools: Demailly's regularization technique
(which is heavily used in \cite{BD}), and the viscosity approach for degenerate complex Monge-Amp\`ere equations developed in \cite{EGZ2}.

The proof of our main theorem follows exactly the same lines: the result of Berman-Demailly applies for general big
classes, while  \cite{BEGZ} produces solutions of complex Monge-Amp\`ere equations 
with minimal singularities in big cohomology classes. It thus remains to extend the viscosity
approach of \cite{EGZ2} to the setting of big cohomology classes, which is the contents of the next section.
\end{rem}

Since Richberg's regularization technique \cite{R} applies in a singular setting, we obtain the following
interesting consequence:

\begin{cor}
Let $(V,\omega_V)$ be a compact normal K\"ahler space and let $\f$ be a $\omega_V$-plurisubharmonic function on $V$.
Then there exists a  sequence $(\f_j)$ of smooth $\omega_V$-plurisubharmonic functions 
which decrease towards $\f$.
\end{cor}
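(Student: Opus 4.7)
The strategy is to pass to a K\"ahler resolution of $V$, invoke Theorem \ref{thm:semipos} upstairs, descend the resulting continuous approximations back to $V$, and finish with Richberg's regularization on $V$. First I fix a resolution of singularities $\pi : X \to V$ with $X$ a compact K\"ahler manifold, which exists for any compact normal K\"ahler space. The form $\theta := \pi^* \omega_V$ is then a smooth closed semi-positive real $(1,1)$-form on $X$, and the identity $\int_X \theta^n = \int_V \omega_V^n > 0$ combined with semi-positivity forces the class $\{\theta\}$ to be big. Pulling back, $\tilde\f := \f \circ \pi$ lies in $PSH(X,\theta)$, so Theorem \ref{thm:semipos} furnishes a decreasing sequence $(\tilde\f_j) \subset PSH(X,\theta)$ of continuous $\theta$-psh functions with $\tilde\f_j \searrow \tilde\f$.

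Next I descend each $\tilde\f_j$ to $V$. Normality of $V$ and Zariski's main theorem guarantee that every fiber $F = \pi^{-1}(v)$ is compact and connected. On such a fiber $\theta|_F = 0$ (since $\pi|_F$ is constant), so the restriction $\tilde\f_j|_F$ is locally plurisubharmonic on $F$, and the maximum principle on compact connected analytic varieties forces it to be constant. Hence $\tilde\f_j$ descends to a continuous function $\f_j$ on $V$, and a local verification in analytic charts of the embedding dimension of $V$ shows $\f_j \in PSH(V, \omega_V)$. By construction $\f_j \searrow \f$.

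Finally, Richberg's gluing regularization \cite{R} applies in the singular analytic setting: each continuous $\omega_V$-psh function $\f_j$ admits a smooth $\omega_V$-psh approximation $\hat\f_j$ with $\f_j \leq \hat\f_j \leq \f_j + \e_j$ for any prescribed $\e_j > 0$. Choosing the $\e_j$ to decrease fast enough and passing to a subsequence if necessary, one arranges $\hat\f_{j+1} \leq \hat\f_j$ and $\hat\f_j \searrow \f$, proving the corollary. I expect the descent step to be the most delicate point: it depends critically on normality of $V$ through the connectedness of the fibers of $\pi$, and on a careful chart-by-chart verification that the continuous descent is genuinely $\omega_V$-plurisubharmonic across the singular locus of $V$.
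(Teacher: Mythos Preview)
Your approach mirrors the paper's: resolve, apply Theorem~\ref{thm:semipos} upstairs, descend via the connectedness of the fibers (which you correctly attribute to normality), and then invoke Richberg on $V$. The one genuine imprecision is the final step. Richberg's theorem does \emph{not} directly produce smooth $\omega_V$-psh approximants of a continuous $\omega_V$-psh function $\f_j$: the local convolution-and-patching requires strict positivity, so what one actually gets are smooth functions $\f_{j,k}\in PSH(V,(1+\e_k)\omega_V)$ decreasing to $\f_j$ as $k\to\infty$, with $\e_k\searrow 0$. Your claimed two-sided bound $\f_j\le\hat\f_j\le\f_j+\e_j$ with $\hat\f_j$ genuinely $\omega_V$-psh is not what Richberg delivers.

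The paper repairs this as follows: first normalize $\f<0$ so that all $\f_j$ and $\f_{j,k}$ can be taken negative, then set $u_j:=(1+\e_j)^{-1}\f_{j,j}$, which lies in $PSH(V,\omega_V)\cap{\mathcal C}^\infty(V)$ and still decreases to $\f$. With this rescaling trick inserted, your argument becomes complete and coincides with the paper's.
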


\begin{proof}
Fix $\f \in PSH(X,\omega_V)$.
We can assume without loss of generality that $\f<0$ on $V$.
Let $\pi:X \rightarrow V$ be a desingularization of $V$ and set $\theta:=\pi^* \omega_V$. Then $\p:=\f \circ \pi \in PSH(X,\theta)$.

Since $\pi^*\{\omega_V\} \in H^{1,1}(X,\R)$ is big, it follows from our previous result that we can find continuous
$\theta$-psh functions $\p_j<0$ which decrease towards $\p$ on $X$. Since $\pi$ has connected fibers,
one easily checks that
$$
PSH(X,\theta)=\pi^*PSH(V,\omega_V),
$$
in particular there exists $\f_j \in PSH(V,\omega_V) \cap {\mathcal C}^0(V)$ such that $\p_j:=\f_j \circ \pi$,
with $\f_j$ decreasing to $\f$.

We can now invoke Richberg's regularization result
 \cite{R} (see also \cite{Dem}): using local convolutions
and patching, one can find smooth functions $(\f_{j,k})$ on $V$ which decrease to $\f_j$ as $k \rightarrow +\infty$
and such that $\f_{j,k} \in PSH(V, (1+\e_k) \omega_V)$ with $\e_k \searrow 0$. We can also assume that $\f_{j,k}<0$ on $V$.
Set finally
$$
u_j:=\frac{1}{(1+\e_j)} \f_{j,j} \in PSH(V,\omega_V) \cap {\mathcal C}^{\infty}(V).
$$
We let the reader check that $(u_j)$ still decreases to $\f$.
\end{proof}

\begin{rem}
When $\omega_V$ has integer class, i.e. when it represents the first Chern class of an ample line bundle on $V$,
the above result was obtained in \cite{CGZ} as a consequence of an extension result of
$\omega_V$-psh functions.
\end{rem}

\section{Viscosity approach in a big setting} \label{sec:bigviscosity}

 We set here the basic frame for the viscosity approach to the equation
$$
(\theta+dd^c \f)^n=e^{\e \f} v \leqno{(DMA_v^{\e})}
$$
where 
$v$ is a  volume form with nonnegative {\it continuous} density and $\e >0$ is  a real parameter.

\subsection{Viscosity sub/super-solutions for big cohomology classes}

To fit in with the viscosity point of view, we rewrite the Monge-Amp\`ere equation as
$$
e^{\e \f}v-(\theta+ dd^c \f)^n=0 \leqno{(DMA_v^{\e})}
$$

Let $x\in X$.
If $\kappa \in \Lambda^{1,1} T_{x} X$ we define $\kappa_+^n$ to be $\kappa^n$ if $\kappa \ge 0$ and $0$ otherwise. 
For a technical reason, we will also consider a slight variant of $(DMA_v^{\e})$,
$$
e^{\e \f}v-(\theta+ dd^c \f)_+^n=0 \leqno{(DMA_v^{\e})_+}
$$
If $\f^{(2)}_x$ is the $2$-jet at $x\in X$ of a ${\mathcal C}^2$ real valued function $\f$
we set
$$
F(\f^{(2)}_x)=F^{\e}_{v}(\f_x)=
\left\{
\begin{array}{ll}
e^{\e \f(x)} v_x-(\theta_x+ dd^c \f_x)^n & \text{ if } \theta+dd^c\f_x\ge 0 \\
+\infty & \text{otherwise}.
\end{array}
\right.
 $$
Recall the following definition from \cite{CIL}, \cite[Definition 2.3]{EGZ2}:

\begin{defi}
 A subsolution of $(DMA^{\e}_v)$ is an upper semi-continuous function $\f: X\to \R \cup \{-\infty\}$ such that
$\f\not \equiv -\infty$
and the following property is satisfied:
if $x_0\in X$ and  $q \in {\mathcal C}^2$, defined in a neighborhood of $x_0$, is such that $\f(x_0)=q(x_0)$ and
$$
 \f-q \  \text{ has a local maximum at} \ x_0,
 $$ 
 then $F(q^{(2)}_{x_0})\le 0$. 
 
 We say that $\f$ has minimal singularities if there exists $C>0$ such that
 $$
 V_{\theta}-C \leq \f \leq V_{\theta}+C
 \text{ on } X.
 $$
 \end{defi}

It has been shown in \cite[Corollary 2.6]{EGZ2} that a viscosity subsolution $\f$ of $(DMA^{\e}_v)$ is 
a $\theta$-psh function which satisfies $(\theta+dd^c \f)^n \geq e^{\e\f} v$ in the pluripotential sense of
\cite{BT82,BEGZ}.

\smallskip

We now slightly extend the concept of supersolution, so as to allow a supersolution to take $-\infty$ values:

\begin{defi}\label{supersol}
A supersolution of $(DMA^{\e}_v)$ is a supersolution of $(DMA^{\e}_v)_+$, that is, a   function 
$\f: X\to \R \cup \{\pm\infty\}$ such that 
$e^{\f}$ is lower semicontinuous, $\f\not \equiv +\infty$, $\f \not \equiv -\infty$
and the following property is satisfied:
if $x_0\in X$ and  $q \in {\mathcal C}^2$, defined in a neighborhood of $x_0$, is such that $\f(x_0)=q(x_0)$ and
$$
 \f-q \  \text{ has a local minimum at} \ x_0,
 $$
 then $F_+(q^{(2)}_{x_0})\geq 0$. 
 
 We say that $\f$ has minimal singularities if there exists $C>0$ such that
 $$
 (V_{\theta})_*-C \leq \f \leq (V_{\theta})_*+C
 \text{ on } X.
 $$
\end{defi}

Here $(V_{\theta})_*$ denotes the lower semi-continuous regularization of $V_{\theta}$.
 It is important to allow $-\infty$ values since we are trying to build a $\theta$-psh viscosity solution of $(DMA^{\e}_v)$: in
 general such a function will be infinite at polar points $x_0 \in P(\a)$. Note that we don't impose any condition at such points.

\begin{defi}
A viscosity solution of $(DMA^{\e}_v)$ is a function that is both a sub-and a supersolution. In particular a viscosity solution
$\f$ is automatically an exponentially continuous $\theta$-plurisubharmonic function.
\end{defi}

By comparison, a pluripotential solution of $(DMA^{\e}_v)$ is an
 usc function $\f \in L_{loc}^{\infty} (\rm{Amp}(\a)) \cap PSH(X,\omega)$
such that 
$$
(\theta+dd^c\f)^n_{BT}= e^{\e\f} v
\text{ in } \rm{Amp}(\a)
$$
in the sense of Bedford-Taylor \cite{BT82} (see \cite{BEGZ} for the slightly more general notion of non-pluripolar products): it follows from
\cite{BEGZ} that such a pluripotential solution automatically has minimal singularities, however there is no continuity information, especially
at points in $X \setminus \rm{Amp}(\a)$, as this set is pluripolar hence invisible from the pluripotential point of view.

\subsection{The big viscosity comparison principle}

 \begin{thm} \label{thm:comparison}
 Let $\a \in H^{1,1}(X,\R)$ be a big cohomology class and assume $\e>0$ and $v>0$. 
 Let $\f$ (resp. $\p$) be a subsolution (resp. supersolution) of $(DMA^{\e}_v)$ with minimal singularities, then
 $$
 \f \leq \p \text{ in } \rm{Amp}(\a).
 $$
 
 Moreover  $\f \leq \p$ on $X$ if and only if $P(\a)=NB(\a)$.
 \end{thm}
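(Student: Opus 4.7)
My strategy is to adapt the viscosity comparison principle of \cite{EGZ2}, established there in the semi-positive setting, to the big case by perturbing the subsolution with the potential $\p_0$ of the fixed K\"ahler current $T_0 = \theta + dd^c \p_0 \geq \omega_X$ with analytic singularities. I will first prove $\f \leq \p$ on $\mathrm{Amp}(\a)$ in full generality, then handle the extension to $X$ under $P(\a)=NB(\a)$ and construct a counterexample in the opposite case.

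For the inequality on $\mathrm{Amp}(\a)$, fix $\de \in (0,1)$ and set
$$
\f_\de := (1-\de)\f + \de \p_0.
$$
A direct expansion gives $\theta + dd^c \f_\de \geq \de \omega_X$, so $\f_\de$ acquires a $\de$-strict positivity. Using that $\f$ is a viscosity subsolution of $(DMA_v^{\e})$ and that $\p_0$ is smooth on $\mathrm{Amp}(\a)$, one verifies that $\f_\de$ is a viscosity subsolution on $\mathrm{Amp}(\a)$ of a perturbed, strictly elliptic complex Monge-Amp\`ere equation whose right-hand side has the form $(1-\de)^n e^{\e(u-\de\p_0)/(1-\de)} v$. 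Since $\p_0 \equiv -\infty$ on $X\setminus\mathrm{Amp}(\a)$ while $\p \geq (V_\theta)_* - C$ has only the mild singularities of $V_\theta$, the difference $\f_\de - \p$ tends to $-\infty$ along the singular locus of $T_0$, so $\sup_X(\f_\de - \p)$ is attained on a compact subset $K \subset \mathrm{Amp}(\a)$. Near $K$ the potential $\p_0$ is smooth, so the problem localizes in a coordinate ball to a classical viscosity comparison for a strictly elliptic Monge-Amp\`ere equation, and the Crandall--Ishii--Lions doubling-of-variables argument of \cite{EGZ2} applies verbatim: the term $\de\omega_X$ plays the role of a background K\"ahler form, and the strict monotonicity of $u \mapsto e^{\e u}$ yields the required contradiction. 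Hence $\f_\de \leq \p$ on $\mathrm{Amp}(\a)$; since $\f_\de \equiv -\infty$ off this set, in fact $\f_\de \leq \p$ on $X$, and letting $\de \searrow 0$ gives $\f \leq \p$ on $\mathrm{Amp}(\a)$.

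For the dichotomy, assume first $P(\a)=NB(\a)$. At a polar point $x \in P(\a)$ we have $\f(x)=-\infty \leq \p(x)$ trivially, while at a non-polar point $x \in X\setminus\mathrm{Amp}(\a)$, the function $V_\theta$ is locally bounded at $x$ by assumption, so $\f$ and $\p$ are locally bounded near $x$, and a localized version of the perturbation argument (replacing $\p_0$ by a local smooth potential at $x$ of a K\"ahler form in $\a$ restricted to a neighborhood) extends the inequality through such points. Conversely, if $P(\a) \subsetneq NB(\a)$ and one picks $x_0 \in NB(\a) \setminus P(\a)$, then $V_\theta(x_0) > -\infty$ while $(V_\theta)_*(x_0) = -\infty$ by definition of the non-bounded locus. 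Taking $\f$ to be a pluripotential solution of $(DMA_v^{\e})$ supplied by \cite{BEGZ} (a viscosity subsolution with minimal singularities, finite at $x_0$) and $\p := \f_*$, its lower semi-continuous regularization, which one checks remains a viscosity supersolution of $(DMA_v^\e)_+$ with minimal singularities and satisfies $\p(x_0) = -\infty$, one gets $\f(x_0) > \p(x_0)$, ruling out the global comparison.

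The principal obstacle is the passage from $\mathrm{Amp}(\a)$ to the full manifold: a naive extension across the pluripolar set $X\setminus\mathrm{Amp}(\a)$ using semi-continuity fails because $\f$ is upper and $\p$ is lower semi-continuous, so the inequality can be lost exactly at points where $V_\theta$ is bounded but which lie outside the ample locus. The perturbation by $\p_0$ is tailored precisely so that the maximum of $\f_\de - \p$ lies inside $\mathrm{Amp}(\a)$, but adapting it locally near a non-polar point of $X\setminus\mathrm{Amp}(\a)$ under $P(\a)=NB(\a)$ requires replacing the global potential $\p_0$ by a local smooth one, which is the technically delicate step. A secondary point is the verification that the lower regularization of a pluripotential solution is genuinely a viscosity supersolution, which underpins the necessity direction.
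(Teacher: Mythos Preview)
Your perturbation argument on $\mathrm{Amp}(\a)$ is essentially the paper's: set $\f_\de=(1-\de)\f+\de\p_0$, use that $\p_0\equiv-\infty$ on $X\setminus\mathrm{Amp}(\a)$ to force the maximum inside $\mathrm{Amp}(\a)$, and invoke the local doubling argument of \cite{EGZ2}. Two cosmetic points: the paper adds the constant $n\log(1-\de)$ so that $\f_\de$ is a subsolution of the \emph{same} equation $(DMA_v^\e)$ rather than a perturbed one, and it maximizes $e^{\f_\de}-e^{\p}$ rather than $\f_\de-\p$, which avoids the ill-defined quantity $-\infty-(-\infty)$ at points where both blow up. Your counterexample for $P(\a)\neq NB(\a)$ is also correct and close to the paper's, which simply observes that \emph{any} sub/supersolution pair with minimal singularities already violates the inequality at a point of $NB(\a)\setminus P(\a)$, since $\f(x_0)\ge V_\theta(x_0)-C>-\infty$ while $\p(x_0)\le (V_\theta)_*(x_0)+C=-\infty$.

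The genuine gap is in your extension to all of $X$ under $P(\a)=NB(\a)$. You propose to handle a non-polar point $x\in X\setminus\mathrm{Amp}(\a)$ by ``replacing $\p_0$ by a local smooth potential at $x$ of a K\"ahler form in $\a$ restricted to a neighborhood''. But by the very definition of the ample locus, no K\"ahler current in $\a$ with analytic singularities is smooth near such an $x$; there is no such local object to substitute. The paper's fix is much simpler and bypasses perturbation entirely: maximize the upper semicontinuous function $e^{\f}-e^{\p}$ on $X$. If the maximizer $x_0$ lies in $P(\a)$ the inequality is trivial; otherwise $x_0\notin P(\a)=NB(\a)$, so $V_\theta$ is locally bounded near $x_0$, hence so are $\f$ and $\p$, and the local \cite{EGZ2} argument applies directly in a coordinate ball around $x_0$ with a smooth local potential $h$ for $\theta$ (no positivity of $\theta$ is needed in that local step). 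The role of $\p_0$ is solely to push the maximum into the ample locus in the general case; once $P(\a)=NB(\a)$ you do not need it at all.
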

 
 \begin{proof}
 We can assume $\e=1$ without loss of generality. 
 
 
 



We let
$x_0 \in X$ denote a point that realizes the maximum of $e^\f-e^\p$ on $X$. If $x_0 \in P(\a)$, then we 
conclude trivially:  $\f(x_0)=-\infty$, hence $\max_X ( e^{\f}-e^{\psi} )\le 0$. 

Assume now $x_0 \not\in \rm{NB}(\a)$. Then $\f$ and $\p$ are locally bounded 
near $x_0$. 
Since $NB(\alpha)$ is closed, we can choose complex coordinates $(z^1, \ldots, z^n)$ near $ x_0$
defining a biholomorphism identifying an open neighborhood of ${x}_0$ in $X-\rm{NB}(\a)$ to the complex ball
 $B(0,5)\subset \C^n$ of radius $5$ sending ${x}_0$ to the origin. 

  We define $h \in {\mathcal C}^2(\overline{B(0,5)}, \R)$ to be a local potential 
smooth up to the boundary  for $\theta$ and extend it smoothly to $X$.
In particular $dd^c h = \theta$ and $w_- := \f + h$ is a bounded viscosity subsolution
of the equation
$$
(dd^c w)^n =e^{w} W
\text{ in } B(0,5)
$$ 
with $W$ a positive and continuous volume form. 
On the other hand $w^+= \p +h$ is a bounded viscosity supersolution of the same equation.
 
Now choose $C>0$ such that $\sup_{x\in \overline{B(0,4)} } \max(|\f(x)|,|\p(x)|) \le C/1000$ and
$\sup_{x\in \overline{B(0,4)} } |h(x)|\le C/10$. 
With this  constant $C>0$, construct as in \cite[p. 1076]{EGZ2}
a  smooth auxiliary function    $\varphi_3$ on $\overline{B(0,4)}^2$. 
Using the same notations as in \cite[p. 1077]{EGZ2}, 
fix $\beta > 0$ and consider $(x_{\beta},y_{\beta}) \in \overline{B(0,4)}^2$ such that:
\begin{eqnarray*}
M_{\beta}&=&\sup_{(x,y)\in \overline{B(0,4)}^2} w_-(x)-w^+(y) -\varphi_3(x,y)-\frac{\beta}{2} d^2(x,y)\\
&=&w_-(x_{\beta})-w^+(y_{\beta}) -\varphi_3(x_{\beta},y_{\beta})-\frac{\beta}{2} d^2(x_{\beta},y_{\beta}).
\end{eqnarray*}
By construction,  $\varphi_3$  is big enough outside $B (0,2)^2$ to ensure that
the sup is achieved at some point  $(x_{\beta}, y_{\beta}) \in B(0,2)^2$. 
Limit points $(x,y)$ of $(x_{\beta}, y_{\beta})$ satisfy $x=y$
and the construction forces $\varphi_3$ to vanishes to high order at such a limit point. 
Then, the argument of \cite[p. 1077-1078]{EGZ2}
based on Ishii's version of the maximum principle (see \cite{CIL}) applies verbatim
to prove that 
$$
\displaystyle\limsup _{\beta \to 0}w^+(x_{\beta})-w^-(y_{\beta})\ge 0
$$ 
and enables us to conclude that $\f\le \psi$.

However, if $x_0 \in NB(\a) \setminus P(\a)$, $\f (x_0)>-\infty$ since $\phi$ has minimal
singularities, while $\p(x_0)=-\infty$
since $\p$ has minimal singularities in the sense of definition \ref{supersol}
and $x_0\in  NB(\a)$ implies that $(V_{\theta})_*(x_0)=-\infty$.
 The global comparison principle thus fails  if  $P(\a)\not =NB(\a)$.

Let us now justify that in general we do have $\f\le \p$ on ${\rm Amp}(\a)$. 
 Let $T_0=\theta+dd^c \p_0$ be a K\"ahler current such 
that $\rm{Amp}(\a)=X \setminus Sing(T_0)$, $\p_0\le \p$ and $\p_0 \le 0$.
 Fix $\delta>0$ and consider $\f_{\delta}:=(1-\delta) \f+\delta \p_0+n\log(1-\delta)$. We claim that $\f_{\delta}$ is again a subsolution 
 of  $(DMA^{\e}_v)$. Indeed there is nothing to test on $Sing(T_0)$, while in ${\rm Amp}(\a)$
 $$
 (\theta+dd^c \f_{\delta})^n \geq (1-\delta)^n (\theta+dd^c \f)^n\ge (1-\delta)^n e^\f v \geq e^{\f_{\delta}} v,
 $$
 as follows easily by interpreting these inequalities in the pluripotential sense (see \cite[Proposition 1.11]{EGZ2}).
 
 Let $x_{\delta}$ be a point where the upper semi-continuous 
function $e^{\f_{\delta}}-e^\p$  attains its maximum.
 If $x_{\delta} \in Sing(T_0)$, then $e^{\f_{\delta}(x_{\delta})}=0$ hence 
 $$
 e^{\f_{\delta}} \leq e^{\p} \Rightarrow \f_{\delta} \leq \p
 \text{ on } X.
 $$
 If $x_{\delta} \in \rm{Amp}(\a)$, then both $\f_{\delta}$ and $\p$ are locally bounded near $x_{\delta}$ 
and the argument above leads to the conclusion
 that $e^{\f_{\delta}(x_{\delta})} \leq e^{\p(x_{\delta})}$ 
 hence $\f_{\delta} \leq \p$ on $X$. 
 Letting $\delta$ decrease to zero, we infer that $\f \leq \p$ in $\rm{Amp}(\a)$.
 \end{proof}

\subsection{Continuous solutions of big Monge-Amp\`ere equations}

 \begin{thm} \label{thm:visc}
  Let $\a \in H^{1,1}(X,\R)$ be a big cohomology class and assume $\e>0$ and $v>0$ is a continuous positive density. 
Then there exists a unique pluripotential solution $\f$ of $(DMA^{\e}_v)$  on $X$, such that
\begin{enumerate}
\item $\f$ is a $\theta$-plurisubharmonic function with minimal singularities,
\item $\f$ is a viscosity solution in $\rm{Amp}(\a)$ hence continuous there,
\item Its lower semicontinuous regularisation   $\f_*$  is a viscosity supersolution. 
\end{enumerate} 

If $P(\a)=NB(\a)$ then $\f$ 
is a viscosity solution of $(DMA^{\e}_v)$ on $X$, hence $e^{\f}$ is continuous on $X$.
 \end{thm}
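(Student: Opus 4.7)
The plan is to combine the existence/uniqueness of a pluripotential solution of $(DMA^{\e}_v)$ from \cite{BEGZ} with a Perron construction modeled on \cite{EGZ2}, now using the big comparison principle (Theorem \ref{thm:comparison}) in place of its semi-positive counterpart. Concretely, I would define the Perron envelope
\[
\varphi := \sup\{u : u \text{ is a viscosity subsolution of } (DMA^{\e}_v) \text{ with minimal singularities}\}.
\]
The family is non-empty because \cite{BEGZ} produces a pluripotential solution $u_0$ with minimal singularities and, since $v$ is continuous, the local pluripotential-to-viscosity translation of \cite[Cor.~2.6]{EGZ2} (which does not depend on $\a$ being semi-positive) makes $u_0$ a viscosity subsolution. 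An upper bound $\varphi\le V_\theta+M$, with $M=M(\e,v)$, is obtained by evaluating any subsolution at a maximum point of $u-V_\theta$ and using $\e>0$; hence $\varphi$ has minimal singularities.

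Next I would invoke the standard Perron lemma \cite{CIL}: the upper semicontinuous regularization $\varphi^*$ is still a viscosity subsolution of $(DMA^{\e}_v)$, so $\varphi^*\in PSH(X,\theta)$, has minimal singularities, and belongs to the Perron family, yielding $\varphi=\varphi^*$ off a pluripolar set. The delicate step is showing that the lower semicontinuous regularization $\varphi_*$ is a viscosity supersolution of $(DMA^{\e}_v)_+$ in the extended sense of Definition \ref{supersol}. I would argue by the classical bump contradiction: if at some $x_0$ a $C^2$ test function $q$ touched $\varphi_*$ from below with $F_+(q_{x_0}^{(2)})<0$, one would locally perturb $\varphi$ upward by a strict subsolution exceeding $\varphi$ near $x_0$, contradicting maximality. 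By Definition \ref{supersol} there is no condition to check at points of $P(\a)$, so the whole discussion takes place at points where $\varphi_*$ is finite; at such points, choosing a coordinate ball inside $X\setminus NB(\a)\subset \mathrm{Amp}(\a)$ and a local potential for $\theta$ (available because the K\"ahler current $T_0$ is smooth there), the arguments of \cite[p.~1080]{EGZ2} carry over verbatim.

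Once both regularizations have the right viscosity properties, Theorem \ref{thm:comparison} applied to the pair $(\varphi^*,\varphi_*)$ with minimal singularities gives $\varphi^*\le\varphi_*$ on $\mathrm{Amp}(\a)$, and since trivially $\varphi_*\le\varphi\le\varphi^*$ we conclude $\varphi=\varphi^*=\varphi_*$ on $\mathrm{Amp}(\a)$. Thus $\varphi$ is continuous there and is a viscosity solution, which \cite[Cor.~2.6]{EGZ2} upgrades to a pluripotential solution on $X$; uniqueness at the pluripotential level is the comparison principle of \cite{BEGZ}. Under the additional hypothesis $P(\a)=NB(\a)$, Theorem \ref{thm:comparison} gives the inequality $\varphi^*\le\varphi_*$ globally, hence $e^{\varphi}=e^{\varphi_*}$ is continuous on all of $X$ (with value $0$ on $P(\a)$).

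The main obstacle is the supersolution property of $\varphi_*$. The classical Perron bump requires a coordinate ball on which the candidate is locally bounded, a condition which breaks down precisely on $NB(\a)$ where the minimal-singularities reference $V_\theta$ blows up. The conceptual point that makes the argument go through is Definition \ref{supersol}: allowing $-\infty$-values frees us from any verification at polar points, so the technical work is confined to $\mathrm{Amp}(\a)$ where local potentials for $\theta$ coming from $T_0$ are smooth and the \cite{EGZ2} construction reproduces itself essentially unchanged.
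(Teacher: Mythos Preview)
Your proposal follows essentially the same Perron-method strategy as the paper: produce a viscosity subsolution from the \cite{BEGZ} pluripotential solution via \cite[Cor.~2.6]{EGZ2}, verify by the bump construction that the lower regularization of the envelope is a supersolution, and then apply the big comparison principle (Theorem \ref{thm:comparison}) on $\mathrm{Amp}(\a)$, and globally if $P(\a)=NB(\a)$. The only substantive difference is in the upper barrier: the paper builds it explicitly as $\psi_1=(V_\theta+C)_*$, using \cite{BD} to see that $(\theta+dd^cV_\theta)^n$ has bounded density and \cite[Lemma 4.7(1)]{EGZ2} to pass to a viscosity supersolution, and then restricts the Perron family to $\phi_0\le\psi\le\psi_1$; your bound ``at a maximum of $u-V_\theta$'' is delicate since $V_\theta$ is not $C^2$ and cannot serve as a test function (testing with a constant at the maximum of $u$ itself would work instead).

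One minor inaccuracy: the inclusion $X\setminus NB(\a)\subset\mathrm{Amp}(\a)$ you assert is not generally true, since $\mathrm{Amp}(\a)$ is Zariski open while $X\setminus NB(\a)$ need not be (recall $\psi_0$ does \emph{not} have minimal singularities). This is harmless for your argument, however, because the bump construction at a point $x_0$ where $\f_*(x_0)>-\infty$ only needs local boundedness of the envelope and a smooth local potential for $\theta$, both of which are available on all of $X\setminus NB(\a)$.
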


\begin{proof} We can always assume that $\e = 1$.
 Since the comparison principle holds on $\rm{Amp}(\a)$, we can use Perron's method by considering the upper enveloppe of subsolutions. 
 
It follows from \cite{BEGZ} that the equation $(DMA^{1}_v)$ has a pluripotential solution
$\phi_0$ which is a $\theta-$plurisubharmonic function on $X$ satisfying 
the equation $(\theta + dd^c \phi_0)^n = e^{\phi_0} v$ weakly on $X$. 
Since the right hand side is a bounded volume form, it follows 
from the big version of Kolodziej's uniform estimates that $\f_0$ has minimal singularities.

Moreover  from the definition of a subsolution in the big case and
 \cite[Corollary 2.6]{EGZ2}, it follows that $\f_0$ is a viscosity subsolution to the equation $(DMA^{1}_v)$. 

On the other hand, since by \cite{BD}, $V_\theta$ satisfies the equation 
$(\theta + dd^c V_{\theta})^n = {\bf 1}_{\{V_{\theta} = 0\}} \theta^ n$ 
in the pluripotential sense and the right hand side is a bounded volume form, it follows  
that for some constant $C >> 1$ the function $\phi_1 := V_\theta + C$
 satisfies the inequality $(\theta + dd^c \phi_1)^n \leq e^{\phi_1} v$ in pluripotential sense. It follows 
therefore from the proof of \cite[Lemma 4.7(1)]{EGZ2} that $\psi_1 := (\phi_1)_*$ is a (viscosity) supersolution to the equation  $(DMA^{1}_v)$. 

We can now consider the upper envelope of (viscosity) subsolutions,

$$
 \f := \sup \{ \p \ ; \ \p \ \text{viscosity subsolution}, \  \phi_0 \leq \psi \leq \psi_1\},
$$
which is a subsolution with minimal singularities to the equation  $(DMA^{1}_v)$.
Using the bump construction (\cite{CIL}, \cite{EGZ2}) we can show that the lower semi-continuous regularization $\f_*$ of $\f$ is a (viscosity) supersolution with minimal singularities to the equation  $(DMA^{1}_v)$.

Therefore since the comparison principle holds on $\rm{Amp}(\a)$, 
it follows that $\f \leq \f_*$ on $\rm{Amp}(\a)$, hence $\f = \f_*$ on $\rm{Amp}(\a)$ is a viscosity solution to the equation  $(DMA^{1}_v)$ on $\rm{Amp}(\a)$.

If moreover $P (\a) = NB (\a)$ then we conclude by Theorem \ref{thm:comparison} that $\f = \f_*$ on $X$ is a viscosity solution to the equation  $(DMA^{1}_v)$ on $X$, hence $e^\f$ is continuous on $X$.

\end{proof}

\section{Pluripotential tools} \label{sec:bigpluripotential}

\subsection{Stability inequalities for big classes}
 
The following result is the main stability inequality established in \cite{GZ3}:
 
\begin{thm}  \label{thm:stability}
Assume $(\theta+dd^c\f_{\mu})^n=f_{\mu} \omega_X^n, (\theta+dd^c \f_{\nu})^n=f_{\nu} \omega_X^n$, 
where the densities $0 \leq f_{\mu},f_{\nu}$ are in   $L^p(\omega_X^n)$ for some $p>1$ and $\f_{\mu},\f_{\nu} \in PSH(X,\theta)$
are normalized by $\sup_X \f_{\mu}=\sup_X \f_{\nu}=0$. Then
 $$
  \Vert \f_\mu - \f_\nu \Vert_{L^{\infty} (X)} \leq M_{\tau} \Vert f_{\mu}- f_{\nu} \Vert_{L^1 (X)}^{\tau},
 $$
where  $M_{\tau} > 0$   only depends on upper bounds for the $L^p$ norms of $f_{\mu},f_{\nu}$ and
$$
0<\tau<\frac{1}{2^n(nq+1)-1}, \; \; \;
\frac{1}{p}+\frac{1}{q}=1.
$$
\end{thm}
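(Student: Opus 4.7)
The plan is to adapt Ko\l odziej's capacity iteration to the big class setting, using $V_\theta$ as the reference function in place of the constant $0$. First I would invoke the big--class $L^\infty$--estimate of \cite{BEGZ} (the Ko\l odziej--type theorem for big classes) to deduce that $\f_\mu$ and $\f_\nu$ have minimal singularities with $\Vert \f_\mu - V_\theta \Vert_\infty, \Vert \f_\nu - V_\theta \Vert_\infty \leq C_0$, where $C_0$ depends only on upper bounds for $\Vert f_\mu \Vert_p, \Vert f_\nu \Vert_p$. Consequently $\p := \f_\mu - \f_\nu$ is globally bounded, defined on all of $X$ (the singular contributions of $V_\theta$ cancel), and by symmetry it is enough to estimate $\sup_X(-\p)$.

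Next I would set up the Monge--Amp\`ere capacity $\text{Cap}_\theta$ on $X$ via non--pluripolar products of $\theta$-psh functions squeezed between $V_\theta - 1$ and $V_\theta$, following \cite{BEGZ}. The pluripotential comparison principle applied to the inclusion $\{\p < -s - t\} \subset \{\p < -t\}$ yields, for all $s,t > 0$,
$$
s^n \, \text{Cap}_\theta\bigl(\{\p < -s - t\}\bigr) \leq \int_{\{\p < -t\}} (\theta + dd^c \f_\mu)^n = \int_{\{\p < -t\}} f_\mu \, \omega_X^n,
$$
and H\"older's inequality bounds the right--hand side by $\Vert f_\mu \Vert_p \, \text{Vol}(\{\p < -t\})^{1/q}$. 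An initial volume control $\text{Vol}(\{\p < -t\}) \leq t^{-1} \Vert f_\mu - f_\nu \Vert_1 + \text{lower order}$ comes from testing the difference of the two Monge--Amp\`ere equations against $\max(-\p - t, 0)$ via integration by parts.

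The last, and most delicate, ingredient is a volume--capacity comparison uniform in the data: a Skoda--Zeriahi exponential--integrability estimate for $\theta$-psh functions with minimal singularities, yielding
$$
\text{Vol}(E) \leq A \exp\bigl(-\a \, \text{Cap}_\theta(E)^{-1/n}\bigr),
$$
with $A, \a > 0$ depending only on the $L^p$--norm bounds. Combining these three estimates produces a functional inequality of De Giorgi type on $g(t) := \text{Cap}_\theta(\{\p < -t\})$, of the rough form $s^n g(s+t) \leq F\bigl(\Vert f_\mu \Vert_p, g(t)\bigr)$ with $F$ having a superlinear zero at $0$. Iterating this inequality produces a critical level $t^* \leq C \Vert f_\mu - f_\nu \Vert_1^{\tau}$ beyond which $g(t^*) = 0$, and the symmetric argument gives the claimed H\"older bound. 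A careful bookkeeping of the exponents across the iteration, combining the $2^n$ factor from the comparison step (via convex combinations) with the $nq+1$ factor from H\"older plus volume--capacity, yields precisely the admissible range $0 < \tau < \frac{1}{2^n(nq+1) - 1}$.

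The main obstacle I anticipate is uniformity of all constants across the non--ample locus. Since $V_\theta$ is itself unbounded on $NB(\a)$, every estimate must be formulated with respect to non--pluripolar Monge--Amp\`ere products and capacities defined relative to $V_\theta$, and one must verify that the Skoda--Zeriahi integrability constants depend only on $L^p$--norm bounds rather than on the individual potentials. This forces the initial reduction to minimal singularities to be completely quantitative, which is itself the main content of the big Ko\l odziej theorem of \cite{BEGZ} and the principal input that makes the iteration go through in this setting.
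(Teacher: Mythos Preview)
The paper does not prove this theorem. It is stated as ``the main stability inequality established in \cite{GZ3}'' and is quoted here as a black box, with no argument supplied. So there is no proof in the present paper to compare your proposal against.

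Your sketch is a plausible outline of the Ko\l odziej-type iteration that one expects \cite{GZ3} to carry out in the big setting: the reduction to minimal singularities via the big $L^\infty$-estimate of \cite{BEGZ}, the comparison principle bounding capacities of sublevel sets of $\f_\mu-\f_\nu$ by Monge--Amp\`ere mass, H\"older's inequality, a uniform volume--capacity estimate, and a De~Giorgi iteration. Whether your bookkeeping of the exponents actually recovers the stated range for $\tau$, and whether the initial volume control from ``testing the difference of the two Monge--Amp\`ere equations against $\max(-\p-t,0)$'' works as cleanly as you indicate (integration by parts against non-pluripolar products in a big class is not automatic), would require checking against \cite{GZ3} itself. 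But none of that is addressed in the paper under review, which simply invokes the result.
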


\begin{cor} \label{cor:epsilon}
  Let $\a \in H^{1,1}(X,\R)$ be a big cohomology class and assume $\e>0$ and $v \geq 0$ is a
probability measure
  with $L^p$-density with respect to Lebesgue measure, where $p>1$.
Then there exists a unique $\theta$-plurisubharmonic function $\f$ with minimal singularities 
which is a pluripotential solution of $(DMA^{\e}_v)$ on $X$ 
  
Moreover $\f$ is continuous on $\rm{Amp}(\a)$  and
   if $P(\a)=NB(\a)$ then  $e^{\f}$ is also continuous on $X$.
 \end{cor}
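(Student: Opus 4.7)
The plan is to reduce to the continuous-density case handled by Theorem \ref{thm:visc} by approximation, then promote $L^1$-convergence to $L^\infty$-convergence using the stability inequality of Theorem \ref{thm:stability}. Concretely, I would fix a sequence $(v_j)$ of strictly positive continuous probability densities with $v_j \to v$ in $L^p(\omega_X^n)$ (obtained by mollifying $v + \varepsilon_j \omega_X^n$ and renormalizing). Theorem \ref{thm:visc} then produces for each $j$ a unique pluripotential solution $\f_j$ of $(DMA_{v_j}^{\e})$ with minimal singularities, continuous on $\mathrm{Amp}(\a)$, and with $e^{\f_j}$ continuous on $X$ when $P(\a)=NB(\a)$.

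The first substantive step is to control $\f_j$ uniformly in $j$. The total mass identity $\int e^{\e \f_j} v_j = \mathrm{vol}(\a)$ combined with Jensen's inequality gives a uniform upper bound on $\sup_X \f_j$; together with the uniform $L^p$-bound on $v_j$ this bounds the $L^p$-norm of the density $e^{\e\f_j} v_j/\omega_X^n$ uniformly in $j$. The big-class Kolodziej-type uniform estimate from \cite{BEGZ} then yields a uniform bound $\|\f_j - V_\theta\|_\infty \le C$. By $L^1$-compactness of normalized $\theta$-psh functions I extract a subsequence $\f_{j_k} \to \f$ in $L^1(X)$, with $\f$ having minimal singularities. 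Since $\f_{j_k}$ is uniformly bounded above, dominated convergence gives $e^{\e\f_{j_k}} \to e^{\e\f}$ in $L^1$, and combined with $v_{j_k} \to v$ in $L^p$, the right-hand-side densities $g_{j_k} := e^{\e\f_{j_k}} v_{j_k}/\omega_X^n$ form a Cauchy sequence in $L^1$.

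At this point I apply Theorem \ref{thm:stability} to the pairs $(\f_{j_k}, \f_{j_\ell})$, after the standard renormalization by $\sup_X$: this shows that $\f_{j_k}$ is Cauchy in sup-norm (modulo a sup-difference, which I then control via the same mass identity applied to the limit). Hence $\f_{j_k} \to \f$ uniformly on $X$, so $\f$ is continuous on $\mathrm{Amp}(\a)$. Passing to the limit in the Monge--Amp\`ere equation (using continuity of the non-pluripolar product along uniformly convergent sequences with uniformly minimal singularities, as in \cite{BEGZ}) exhibits $\f$ as a pluripotential solution of $(DMA_v^\e)$. Uniqueness among pluripotential solutions with minimal singularities follows from the Bedford--Taylor/\cite{BEGZ} comparison principle applied to $\{\f>\psi\}$: on this set $e^{\e\f}v > e^{\e\psi}v$ while comparison forces the opposite inequality on Monge--Amp\`ere masses, and since $v$ has $L^p$-density (so puts no mass on pluripolar sets) this forces $\f=\psi$. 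Uniqueness then upgrades the subsequential convergence to convergence of the full sequence.

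Finally, when $P(\a)=NB(\a)$, each $e^{\f_j}$ is continuous on $X$ by Theorem \ref{thm:visc}. The uniform bound $V_\theta - C \le \f_j \le V_\theta + C$ shows that on any prescribed neighborhood of $P(\a)$ the functions $e^{\f_j}$ and $e^{\f}$ are uniformly small; combined with uniform convergence on the complement (a compact subset of $\mathrm{Amp}(\a)$), this gives $e^{\f_j} \to e^\f$ uniformly on $X$, proving continuity of $e^\f$. The main obstacle I anticipate is the self-referential nature of the density $e^{\e\f}v$, which prevents a direct application of Theorem \ref{thm:stability} and necessitates the bootstrap via $L^1$-compactness and uniqueness described above; a secondary technical point is ensuring that the sup-normalizations $\sup_X \f_j$ converge to $\sup_X \f$, which likewise follows from $L^1$-convergence and the mass identity.
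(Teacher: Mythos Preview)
Your approach is essentially the paper's own: approximate the density by continuous positive ones, solve each approximate equation via Theorem \ref{thm:visc}, extract an $L^1$-convergent subsequence with convergent sup-normalizations, and use the stability inequality (Theorem \ref{thm:stability}) to upgrade to uniform convergence and identify the limit with the \cite{BEGZ} solution. One small correction to your final paragraph: the complement of a neighborhood of $P(\a)$ need not lie in $\mathrm{Amp}(\a)$, and the claimed uniform smallness of $e^{\f_j}$ near $P(\a)$ is not justified at this stage (continuity of $e^{V_\theta}$ is only proved \emph{after} this corollary); but this detour is unnecessary, since the Cauchy estimate from Theorem \ref{thm:stability} already gives $\f_{j_k}\to\f$ uniformly on $X\setminus P(\a)$, hence $e^{\f_{j_k}}\to e^{\f}$ uniformly on all of $X$ (both sides vanish on $P(\a)$ and are uniformly bounded above), and continuity of $e^{\f}$ follows at once.
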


 \begin{proof}
The first part follows from \cite{BEGZ} and we get a unique pluripotential solution $\varphi$
with minimal singularities. 
 Let $f$ denote the density of $v=f\omega_X^n$. We can approximate 
 $f$ by  continuous and positive
 densities by using convolutions, locally $f_{\delta}:=f \star \chi_{\delta}+\delta$, $\delta>0$.
Theorem \ref{thm:visc} insures that there exists a unique
 $\f_{\delta} \in PSH(X,\theta)$ solution to
 $$
 (\theta+dd^c \f_{\delta})^n=e^{\e \f_{\delta}} f_{\delta} \omega_X^n
 $$
 which has minimal singularities and is continuous in ${\rm Amp}(\a)$. 
It follows moreover from  \cite{BEGZ} Theorem \ref{thm:stability} that
  the functions $\f_{\delta}-V_{\theta}$ are uniformly bounded as $\delta \rightarrow 0^+$.

The family $\{\f_{\delta} \}_ {\delta>0}$ is compact in the $L^1$ topology
hence we can extract a sequence $(\delta_k)_k$ such that $\f_{\delta_k}$
converges almost everywhere and $\sup_X \f_{\delta_k}$ converges.  
 We can now apply the stability inequality (Theorem \ref{thm:stability})
 to $\tilde{\f}_{\delta}:=\f_{\delta}-\sup_X \f_{\delta}$
 to check that the functions $\tilde{\f}_{\delta_k}$ form a Cauchy sequence,
 so that the functions ${\f}_{\delta_k}$ form a Cauchy sequence too.  
The uniform limit $\phi=\lim {\f}_{\delta_k}$ has minimal singularities and satisfies 
$
 (\theta+dd^c \phi)^n=e^{\e \phi} f \omega_X^n
 $  in the pluripotential sense, hence coincides
with $\varphi$ .  
  \end{proof}

\subsection{The flat setting}

\begin{thm}  \label{thm:flat}
 Let $\a \in H^{1,1}(X,\R)$ be a big cohomology class and assume $v \geq 0$ is a volume form
  with non-negative $L^p$-density with respect to Lebesgue measure  such that 
  $\int_X v=\rm{Vol}(\a)$ and $p>1$.
Then there exists a unique $\theta$-plurisubharmonic function $\f$ with minimal singularities 
which is a pluripotential solution of $(DMA^{0}_v)$ on $X$ and such that $\int_X \f dv=0$.
  
Moreover $\f$ is continuous on $\rm{Amp}(\a)$  and
   if $P(\a)=NB(\a)$ then  $e^{\f}$ is also continuous on $X$.
 \end{thm}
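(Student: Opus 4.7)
\medskip
\noindent\textit{Proof plan.}
The plan is to regularize by a small exponential term and pass to the limit via the stability estimate of Theorem~\ref{thm:stability} combined with the continuity of Corollary~\ref{cor:epsilon}. Existence and uniqueness up to an additive constant of a pluripotential solution with minimal singularities to $(DMA_v^0)$ are already in \cite{BEGZ}, and the constant will be pinned down by the normalization $\int_X \f\, dv = 0$. Only the continuity statements require real work.

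First, for each $\e > 0$ I would invoke Corollary~\ref{cor:epsilon} (whose proof carries over for any non-negative $L^p$-density of mass $\rm{Vol}(\a)$, the probability normalization being a harmless convention) to obtain a $\theta$-plurisubharmonic function $\f_\e$ with minimal singularities solving $(\theta + dd^c \f_\e)^n = e^{\e \f_\e} v$, continuous on $\rm{Amp}(\a)$, and with $e^{\f_\e}$ continuous on $X$ when $P(\a) = NB(\a)$. Integrating the equation gives $\int_X e^{\e \f_\e} v = \rm{Vol}(\a)$, and combining this identity with the big-class Kolodziej estimate from \cite{BEGZ} would yield a uniform bound $\|\f_\e - V_\theta\|_{L^\infty(X)} \leq C$ for $\e \in (0, \e_0]$, and in particular uniform $L^p$ control of the densities $e^{\e \f_\e} g$ (where $v = g\,\omega_X^n$).

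Next, after normalizing so that $\sup_X \f_\e = 0$, I would apply the stability inequality (Theorem~\ref{thm:stability}) to pairs $\f_\e, \f_{\e'}$: the difference of the corresponding densities is $g(e^{\e \f_\e} - e^{\e' \f_{\e'}})$ (the sup-normalization is absorbed into the exponent). On $X \setminus P(\a)$ the $\f_\e$ are uniformly bounded, so $|e^{\e \f_\e} - e^{\e' \f_{\e'}}| \leq C(\e + \e')$ there; the pluripolar set $P(\a)$ is $g\,\omega_X^n$-null. Thus $\|g(e^{\e \f_\e} - e^{\e' \f_{\e'}})\|_{L^1(X)} = O(\e + \e')$, so $\{\f_\e\}$ is Cauchy in $L^\infty$ with uniform limit $\bar\f$ that is $\theta$-psh with minimal singularities and continuous on $\rm{Amp}(\a)$. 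Passing to the limit in the Monge-Amp\`ere equation via weak continuity of non-pluripolar products \cite{BEGZ} identifies $(\theta + dd^c \bar\f)^n = v$ pluripotentially, and subtracting $\rm{Vol}(\a)^{-1}\int_X \bar\f\, dv$ yields the unique $\f$ with $\int_X \f\, dv = 0$.

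For the global continuity of $e^\f$ when $P(\a) = NB(\a)$: each $e^{\f_\e}$ is continuous on $X$ by Corollary~\ref{cor:epsilon}, and both $e^{\f_\e}$ and $e^\f$ vanish on the common polar set (all functions having minimal singularities). Combined with the uniform convergence off that set, this upgrades to uniform convergence $e^{\f_\e} \to e^\f$ on all of $X$, whence the continuity of $e^\f$. The hard part is the uniform a priori bound $\|\f_\e - V_\theta\|_\infty \leq C$ as $\e \to 0$, since it propagates into every subsequent step: it rests on the big-class Kolodziej estimate from \cite{BEGZ} together with the fixed total mass $\int v = \rm{Vol}(\a)$ to keep $\sup_X \f_\e$ from drifting.
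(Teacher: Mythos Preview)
Your approach is essentially the paper's: solve $(DMA_v^\e)$ via Corollary~\ref{cor:epsilon}, use the uniform bound on $\f_\e - V_\theta$ from \cite{BEGZ}, and apply the stability inequality (Theorem~\ref{thm:stability}) to pass to a uniform limit that inherits the continuity properties. The paper observes directly that the limit $\p$ already satisfies $\int_X \p\,dv = 0$ (since $\int_X (e^{\e\f_\e}-1)\,dv = 0$ for every $\e$, and $(e^{\e\f_\e}-1)/\e \to \p$), whereas you subtract a constant at the end; both normalizations are fine.

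One slip to fix: you assert that the $\f_\e$ are uniformly bounded on $X\setminus P(\a)$, but this is only true when $P(\a)=NB(\a)$. In general the $\f_\e$ are merely within a uniform additive constant of $V_\theta$, and $V_\theta$ is finite yet unbounded on $NB(\a)\setminus P(\a)$; so the pointwise inequality $|e^{\e\f_\e}-e^{\e'\f_{\e'}}|\le C(\e+\e')$ fails there. The $L^1$-estimate you actually need still holds: from $|e^{\e\f_\e}-1|\le \e|\f_\e|\,e^{\e C}$ (using $\f_\e\le C$) and the fact that the $\f_\e$ lie in a fixed $L^q$-ball (they are $\theta$-psh with uniformly bounded sup), H\"older's inequality against $g\in L^p$ gives $\|g(e^{\e\f_\e}-1)\|_{L^1}=O(\e)$, hence $\|g(e^{\e\f_\e}-e^{\e'\f_{\e'}})\|_{L^1}=O(\e+\e')$ as desired.
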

 
 \begin{proof}
Fix $\e>0$ and let $\f_{\e}$ be the unique solution of $(DMA_v^{\e})$ given by Corollary \ref{cor:epsilon}.
It follows from  \cite{BEGZ} that the functions $\f_{\e}-V_{\theta}$ are uniformly bounded, hence 
the argument of Corollary \ref{cor:epsilon} enables to extract a Cauchy sequence
whose  limit
$\p$ is a solution of  $(DMA^{0}_v)$ which satisfies
$$
\int \p \, dv=\lim_{\e \rightarrow 0} \int  \left[ \frac{e^{\e \f_{\e}}-1}{\e} \right] \, dv=\int \f dv=0,
$$
hence $\f=\p$ (by the uniqueness result  proved in \cite{BEGZ}) has all required properties.
 \end{proof}

\begin{cor}
The function $V_{\theta}$ is continuous if and only if $P(\a)=NB(\a)$.
\end{cor}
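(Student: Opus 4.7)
The plan is to prove the two directions separately, with the nontrivial one being a direct application of Theorem \ref{thm:flat} to a volume form manufactured from $V_\theta$ itself. The convention throughout is that ``$V_\theta$ continuous'' means $e^{V_\theta}$ is continuous on $X$ (which is the only sensible interpretation, since $V_\theta \equiv -\infty$ on $P(\a)$).

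For the forward direction, assume $e^{V_\theta}$ is continuous on $X$. Then the set $\{V_\theta > -\infty\} = \{e^{V_\theta} > 0\}$ is open, and on this open set $V_\theta$ is a continuous real-valued function, hence locally bounded. This shows $NB(\a) \subset P(\a)$, and since the reverse inclusion is automatic, we obtain $P(\a) = NB(\a)$.

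For the backward direction, assume $P(\a) = NB(\a)$ and set $v_0 := (\theta + dd^c V_\theta)^n$. The Berman--Demailly result recalled in the proof of Theorem \ref{thm:visc} gives $v_0 = \mathbf{1}_{\{V_\theta = 0\}} \theta^n$ as measures on $X$, so $v_0$ is a positive volume form with bounded density with respect to $\omega_X^n$ (in particular $L^p$ for every $p > 1$). Since $V_\theta$ has minimal singularities, its non-pluripolar Monge--Amp\`ere mass satisfies $\int_X v_0 = \rm{Vol}(\a)$. Moreover $v_0$ is supported on the contact set $\{V_\theta = 0\}$, so trivially $\int_X V_\theta \, dv_0 = 0$. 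Thus $V_\theta$ is a $\theta$-plurisubharmonic function with minimal singularities that is a pluripotential solution of $(DMA_{v_0}^0)$ normalized by $\int V_\theta \, dv_0 = 0$.

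Now apply Theorem \ref{thm:flat} to the volume form $v_0$: it yields a unique such solution $\f$, and under the assumption $P(\a) = NB(\a)$ this $\f$ satisfies $e^\f \in \mathcal{C}^0(X)$. By the uniqueness part of Theorem \ref{thm:flat}, $V_\theta = \f$, so $e^{V_\theta}$ is continuous on $X$, completing the proof. The main conceptual step is noticing that the equation $V_\theta$ canonically satisfies is of the form covered by Theorem \ref{thm:flat} with the correct normalization built in for free (because $v_0$ is concentrated where $V_\theta = 0$); no further obstacle appears, since the boundedness of the density of $v_0$ is already recorded in the proof of Theorem \ref{thm:visc}.
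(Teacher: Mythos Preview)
Your proof is correct and follows essentially the same approach as the paper: both directions are argued identically, with the backward direction invoking the Berman--Demailly description of $(\theta+dd^c V_\theta)^n$ and then applying Theorem \ref{thm:flat}. You add the useful observation that $v_0$ is supported on $\{V_\theta=0\}$, so the normalization $\int V_\theta\,dv_0=0$ is automatic and uniqueness in Theorem \ref{thm:flat} identifies $\f$ with $V_\theta$ directly; the paper glosses over this point (though continuity of $e^{V_\theta}$ would follow anyway since pluripotential solutions of $(DMA^0_v)$ are unique up to an additive constant).
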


\begin{proof}
Observe that if $e^{V_{\theta}}$ is continuous then $P(\a)=NB(\a)$, as points in $NB(\a) \setminus P(\a)$ 
correspond to points where $V_{\theta}$ is finite but not locally finite.

Conversely it follows from the work of Berman-Demailly \cite{BD} that $V_{\theta}$ has locally bounded Laplacian in $\rm{Amp}(\a)$
and satisfies
$$
(\theta+dd^c V_{\theta})^n={\bold{1}}_{\{V_{\theta}=0\}} \theta^n
\text{ in } \rm{Amp}(\a).
$$
Since $\theta^n$ is smooth and the density ${\bold{1}}_{\{V_{\theta}=0\}} $ is bounded, it follows from previous theorem that
$V_{\theta}$ is continuous on $X$ if $P(\a)=NB(\a)$.
\end{proof}

 \subsection{Conclusion}
 
 The proof of the main theorem proceeds now exactly as in that of Theorem \ref{thm:semipos}: if $\f$ is a given
 $\theta$-psh function, we approximate it from above by a decreasing sequence of smooth functions $h_j$ and set
 $$
 \f_j:=P(h_j) \in PSH(X,\theta).
 $$
  These functions have minimal singularities, decrease to $\f$ and solve the complex Monge-Amp\`ere
  equation
  $$
  (\theta+dd^c \f_j)^n={\bold{1}}_{\{\f_j=h_j\}} (\theta+dd^c h_j)^n=f_j \omega_X^n,
  $$
  where $f_j$ is bounded.
  It follows therefore from Theorem \ref{thm:flat} that $\f_j$ is continuous if $P(\a)=NB(\a)$.
  
  \smallskip
 
 Let us conclude by mentionning that we don't know any example of a compact K\"ahler manifold $X$ and a
big cohomology class $\a \in H^{1,1}(X,\R)$ such that $P(\a)$ does not coincide with $NB(\a)$.

\end{document}